\theoremstyle{plain}
\newtheorem{theorem}{Theorem}[section]
\newtheorem{proposition}[theorem]{Proposition}
\newtheorem{lemma}[theorem]{Lemma}
\theoremstyle{definition}
\theoremstyle{remark}
\newtheorem{remark}[theorem]{Remark}
\newtheorem{example}[theorem]{Example}
\newcommand{\nc}{\newcommand}
\nc{\fg}{\mathfrak{f} } \nc{\vg}{\mathfrak{v} } \nc{\wg}{\mathfrak{w} }
\nc{\zg}{\mathfrak{z} } \nc{\ngo}{\mathfrak{n} } \nc{\kg}{\mathfrak{k} }
\nc{\mg}{\mathfrak{m} } \nc{\bg}{\mathfrak{b} } \nc{\ggo}{\mathfrak{g} }
\nc{\ggob}{\overline{\mathfrak{g}} } \nc{\sog}{\mathfrak{so} }
\nc{\sug}{\mathfrak{su} } \nc{\spg}{\mathfrak{sp} } \nc{\slg}{\mathfrak{sl} }
\nc{\glg}{\mathfrak{gl} } \nc{\cg}{\mathfrak{c} } \nc{\rg}{\mathfrak{r} }
\nc{\hg}{\mathfrak{h} } \nc{\tg}{\mathfrak{t} } \nc{\ug}{\mathfrak{u} }
\nc{\dg}{\mathfrak{d} } \nc{\ag}{\mathfrak{a} } \nc{\pg}{\mathfrak{p} }
\nc{\sg}{\mathfrak{s} } \nc{\pca}{\mathcal{P}} \nc{\nca}{\mathcal{N}}
\nc{\lca}{\mathcal{L}} \nc{\oca}{\mathcal{O}} \nc{\mca}{\mathcal{M}}
\nc{\tca}{\mathcal{T}} \nc{\aca}{\mathcal{A}} \nc{\cca}{\mathcal{C}}
\nc{\gca}{\mathcal{G}} \nc{\sca}{\mathcal{S}} \nc{\hca}{\mathcal{H}}
\nc{\bca}{\mathcal{B}} \nc{\dca}{\mathcal{D}} \nc{\val}{\operatorname{val}}
\nc{\vp}{\varphi} \nc{\ddt}{\tfrac{{\rm d}}{{\rm d}t}} \nc{\im}{\mathtt{i}}
 \nc{\dpar}{\tfrac{\partial}{\partial t}}
\nc{\SO}{\mathrm{SO}} \nc{\Spe}{\mathrm{Sp}} \nc{\Sl}{\mathrm{SL}}
\nc{\SU}{\mathrm{SU}} \nc{\Or}{\mathrm{O}} \nc{\U}{\mathrm{U}} \nc{\Gl}{\mathrm{GL}}
\nc{\Se}{\mathrm{S}} \nc{\Cl}{\mathrm{Cl}} \nc{\Spein}{\mathrm{Spin}}
\nc{\Pin}{\mathrm{Pin}} \nc{\G}{\mathrm{GL}_n(\RR)} \nc{\g}{\mathfrak{gl}_n(\RR)}
\nc{\RR}{{\Bbb R}} \nc{\HH}{{\Bbb H}} \nc{\CC}{{\Bbb C}} \nc{\ZZ}{{\Bbb Z}}
\nc{\FF}{{\Bbb F}} \nc{\NN}{{\Bbb N}} \nc{\QQ}{{\Bbb Q}} \nc{\PP}{{\Bbb P}}
\nc{\vs}{\vspace{.2cm}} \nc{\vsp}{\vspace{1cm}} \nc{\ip}{\langle\cdot,\cdot\rangle}
\nc{\ipp}{(\cdot,\cdot)} \nc{\la}{\langle} \nc{\ra}{\rangle} \nc{\unm}{\tfrac{1}{2}}
\nc{\unc}{\tfrac{1}{4}} \nc{\und}{\tfrac{1}{16}} \nc{\no}{\vs\noindent}
\nc{\lam}{\Lambda^2(\RR^n)^*\otimes\RR^n} \nc{\tangz}{{\rm T}^{\rm Zar}}
\nc{\nor}{{\sf n}}  \nc{\mum}{/\!\!/} \nc{\kir}{/\!\!/\!\!/}
\nc{\Ri}{\tfrac{4\Ric_{\mu}}{||\mu||^2}} \nc{\ds}{\displaystyle}
\nc{\ben}{\begin{enumerate}} \nc{\een}{\end{enumerate}} \nc{\f}{\frac}
\nc{\lb}{[\cdot,\cdot]} \nc{\isn}{\tfrac{1}{||v||^2}}
\nc{\He}{\operatorname{Hess}} \nc{\ad}{\operatorname{ad}}
\nc{\Ad}{\operatorname{Ad}} \nc{\rank}{\operatorname{rank}}
\nc{\Irr}{\operatorname{Irr}} \nc{\End}{\operatorname{End}}
\nc{\Aut}{\operatorname{Aut}} \nc{\Inn}{\operatorname{Inn}}
\nc{\Der}{\operatorname{Der}} \nc{\Ker}{\operatorname{Ker}}
\nc{\Iso}{\operatorname{I}} \nc{\Diff}{\operatorname{D}} \nc{\Lie}{\operatorname{L}}
\nc{\tr}{\operatorname{tr}} \nc{\dif}{\operatorname{d}}
\nc{\sen}{\operatorname{sen}} \nc{\modu}{\operatorname{mod}}
\nc{\ricci}{\operatorname{Rc}} \nc{\Ricci}{\operatorname{Ric}}
\nc{\sym}{\operatorname{sym}} \nc{\symac}{\operatorname{sym^{ac}}}
\nc{\symc}{\operatorname{sym^{c}}} \nc{\scalar}{\operatorname{sc}}
\nc{\grad}{\operatorname{grad}}
\nc{\ricciac}{\operatorname{ric^{ac}}} \nc{\riccic}{\operatorname{ric^{c}}}
\nc{\riccig}{\operatorname{ric^{\gamma}}} \nc{\Rin}{\operatorname{M}}
\nc{\Le}{\operatorname{L}} \nc{\tang}{\operatorname{T}}
\nc{\level}{\operatorname{level}} \nc{\rad}{\operatorname{r}}
\nc{\abel}{\operatorname{ab}} \nc{\CH}{\operatorname{CH}}
\nc{\mcc}{\operatorname{mcc}} \nc{\Adj}{\operatorname{Adj}}
\nc{\diag}{\operatorname{Diag}}
\title{On the diagonalization of the Ricci flow on Lie groups}
\author{Jorge Lauret} \author{Cynthia Will}
\thanks{This research was partially supported by grants from  CONICET (Argentina)
and SeCyT (Universidad Nacional de C\'ordoba)}
\address{FaMAF and CIEM, Universidad Nacional de C\'ordoba, C\'ordoba, Argentina}
\email{lauret@famaf.unc.edu.ar, cwill@famaf.unc.edu.ar}
\begin{document}

\maketitle

\begin{abstract}
The main purpose of this note is to prove that any basis of a nilpotent Lie algebra for which all diagonal left-invariant metrics have diagonal Ricci tensor  necessarily produce quite a simple set of structural constants; namely, the bracket of any pair of elements of the basis must be a multiple of some of them and only the bracket of disjoint pairs can be nonzero multiples of the same element.  Some applications to the Ricci flow of left-invariant metrics on Lie groups concerning diagonalization are also given.
\end{abstract}

\section{Introduction}\label{intro}

The classification of all possible signatures for the Ricci curvature of left-invariant metrics on $3$-dimensional unimodular Lie groups obtained by Milnor in \cite{Mln} relies on the following key fact: any such Lie algebra admits a basis such that the corresponding diagonal metrics represent all left-invariant metrics up to isometry and moreover, their Ricci tensors are all diagonal as well.  This of course has also been crucial in the study of the Ricci flow and Ricci solitons for these metrics in \cite{IsnJck,KnpMcl,Glc,CaoSlf,GlcPyn}, which actually cover most of $3$-dimensional geometries from the Geometrization Conjecture.  Indeed, if a basis $\{ X_1,\dots,X_n\}$ of a Lie algebra is {\it stably Ricci-diagonal} in the sense that any diagonal left-invariant metric (i.e. $\la X_i,X_j\ra=0$ for all $i\ne j$) has diagonal Ricci tensor, then the set of diagonal metrics is invariant under the Ricci flow and hence the qualitative study of the solutions reduces considerably.  These distinguished bases were introduced and named by Payne in \cite{Pyn}, where a deep study of the qualitative behavior of the Ricci flow for those left-invariant metrics on nilpotent Lie groups admitting such a basis has been done.

It is therefore natural to ask which Lie algebras admit a stably Ricci-diagonal basis, or even farther, which left-invariant metrics admit an orthonormal stably Ricci-diagonal basis.  For instance, the study in \cite{IsnJckLu} of the Ricci flow on homogeneous $4$-manifolds could not be carried out for the totality of left-invariant metrics (even on the $3$-step nilpotent Lie group) due to the lack of these special bases (see also \cite{Ltt}).

From a more algebraic point of view, there is a condition on the basis of a Lie algebra based on the simplicity of the corresponding set of structural constants.  Namely, a basis $\{ X_1,\dots,X_n\}$ of a Lie algebra is said to be {\it nice} if $[X_i,X_j]$ is always a scalar multiple of some element in the basis and two different brackets $[X_i,X_j]$, $[X_r,X_s]$ can be a nonzero multiple of the same $X_k$ only if $\{ i,j\}$ and $\{ r,s\}$ are disjoint.  As far as we know, this concept first appeared in the literature in \cite[Lemma 3.9]{einsteinsolv}, and has been widely used in the study of {\it nilsolitons} (i.e. Ricci soliton nilmanifolds).  For instance, Nikolayevsky obtained a very useful and simple criterium to decide whether a given nilpotent Lie algebra with a nice basis admits a nilsoliton or not (see \cite[Theorem 3]{Nkl2}).  An updated overview on the existence of these bases in the nilpotent case is given in Section \ref{exist}.

Let $(N,g)$ be a nilpotent Lie group endowed with a left-invariant metric (i.e. a {\it nilmanifold} for short).  If $(\ngo,\ip)$ is the corresponding {\it metric Lie algebra} (i.e. the value $g(e)$ of the metric on the tangent space at the identity element $\tang_eN=\ngo$ is $\ip$), then the Ricci tensor is given by
\begin{equation}\label{ricci}
\ricci(X,Y)= -\unm\sum\langle [X,X_i],X_j\rangle \langle [Y,X_i],X_j\rangle +\unc\sum\langle [X_i,X_j],X\rangle \langle [X_i,X_j],Y \rangle,
\end{equation}
for any orthonormal basis $\{ X_1,\dots,X_n\}$ of $(\ngo,\ip)$.

It follows from (\ref{ricci}) that any nice basis is stably Ricci-diagonal.  Indeed, the basis $\{ X_1,\dots,X_n\}$ is orthogonal relative to any diagonal metric $\ip$ and clearly, each summand in the formula for $\ricci(X_r,X_s)$ vanishes when $r\ne s$.  The main object of this note is to prove that the converse assertion also holds.

\begin{theorem}\label{Thmintro}
A basis of a nilpotent Lie algebra is stably Ricci-diagonal if and only if it is nice.
\end{theorem}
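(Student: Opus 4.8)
Since the excerpt already proves that a nice basis is stably Ricci-diagonal, the plan is to prove only the converse. Writing $[X_i,X_j]=\sum_k c_{ij}^k X_k$ and, for positive reals $\lambda_1,\dots,\lambda_n$, taking the diagonal metric with $\langle X_i,X_i\rangle=\lambda_i$, the vectors $Y_i=X_i/\sqrt{\lambda_i}$ form an orthonormal basis. Substituting these into (\ref{ricci}) and clearing the common factor $\sqrt{\lambda_r\lambda_s}$, the off-diagonal entry $\ricci(Y_r,Y_s)$ vanishes for every $\lambda$ if and only if
\[
2\sum_{i,j} c_{ri}^j c_{si}^j \,\frac{\lambda_j}{\lambda_i}
=\sum_{i,j} c_{ij}^r c_{ij}^s \,\frac{\lambda_r\lambda_s}{\lambda_i\lambda_j}
\]
holds as an identity in $\lambda_1,\dots,\lambda_n$, for each pair $r\ne s$. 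As $(0,\infty)^n$ is Zariski dense, this is an identity of Laurent polynomials, so I may compare the coefficient of each monomial on the two sides; the entire proof amounts to interpreting these comparisons as statements about the $c_{ij}^k$.

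The decisive monomials are those supported on four distinct indices. The right-hand side contributes $\lambda_r\lambda_s/(\lambda_i\lambda_j)$ with coefficient $2c_{ij}^r c_{ij}^s$ whenever $\{i,j\}$ is disjoint from $\{r,s\}$, and such a four-variable monomial can never be produced by the left-hand side, whose monomials $\lambda_j/\lambda_i$ involve at most two indices. Hence $c_{ij}^r c_{ij}^s=0$ for all $\{i,j\}$ disjoint from $\{r,s\}$, i.e.\ $[X_i,X_j]$ has a nonzero component along at most one basis vector outside $\{i,j\}$. Dually, the left-hand side contributes the two-variable monomial $\lambda_b/\lambda_a$ with coefficient $2c_{ra}^b c_{sa}^b$, and when $a,b\notin\{r,s\}$ this monomial has no partner on the right, forcing $c_{ra}^b c_{sa}^b=0$: two brackets sharing the index $a$ cannot both have a nonzero component along a common $X_b$ lying outside both pairs. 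This second relation is exactly the disjointness clause of niceness, once the first clause is in place.

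The subtle point, and the only place where nilpotency is essential, is to rule out the self-components $c_{ij}^i$; only then does ``at most one component outside $\{i,j\}$'' upgrade to ``$[X_i,X_j]$ is a multiple of a single basis vector.'' Here I would compare the two-variable monomials $\lambda_s/\lambda_t$ and $\lambda_r/\lambda_t$ with $t\notin\{r,s\}$, which do have partners on both sides; the comparison gives $c_{rt}^s\big(c_{st}^s-c_{rt}^r\big)=0$. Fixing $t$ and writing $M=\ad X_t$ in the basis $\{X_a\}$, so that $M_{ab}=c_{tb}^a$ and the diagonal entries are the self-components $M_{aa}=c_{ta}^a=-c_{at}^a$, this says precisely that $M_{ab}\ne0$ forces $M_{aa}=M_{bb}$ for $a,b\ne t$. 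Thus the diagonal of $M$ is constant along every nonzero off-diagonal entry; since the $X_t$-column of $M$ vanishes and $M_{tt}=0$, grouping the indices by the value of their diagonal entry shows that each group of value $\mu\ne0$ spans, together with $X_t$, an $\ad X_t$-invariant subspace on which $\ad X_t$ is block triangular with that group as a diagonal block. Because $\ad X_t$ is nilpotent, this block is nilpotent, so its trace $\mu\cdot(\#\text{group})$ vanishes and $\mu=0$. Hence every $\ad X_t$ has zero diagonal, i.e.\ $c_{ij}^i=0$ for all $i,j$.

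Assembling the pieces finishes the argument: the vanishing of the self-components together with the four-variable relation shows that each $[X_i,X_j]$ is a scalar multiple of a single basis vector (necessarily one lying outside $\{i,j\}$), which is the first half of niceness, and the shared-index relation then yields the disjointness half. I expect the self-component step to be the main obstacle: the remaining monomial comparisons are essentially bookkeeping, but eliminating the diagonal entries of the $\ad X_t$ genuinely requires feeding the identity $c_{rt}^s(c_{st}^s-c_{rt}^r)=0$ into the nilpotency argument above, and one must still check that the leftover constant and mixed monomial comparisons are consistent with niceness rather than imposing extra constraints.
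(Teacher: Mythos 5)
Your proof is correct, and although it rests on the same underlying mechanism as the paper's argument, it is carried out by a genuinely different and more elementary route. The paper proves the hard direction by recognizing the Ricci operator as the moment map for the $\Gl_n(\RR)$-action on $V=\lam$ (formula (\ref{defmm}), imported from \cite[Proposition 3.5]{minimal}), writing the off-diagonal entries as $\la\pi(E_{lm})A\cdot\lb,A\cdot\lb\ra$ and separating the contributions of the distinct weight sums $\alpha_{ij}^k+\alpha_{rs}^t$ via linear independence of exponentials. Your Laurent monomials are exactly those weights and your coefficient comparison is the same separation argument written in coordinates, so this part of your proof buys elementarity (no moment map, no root-space formalism) at the cost of the conceptual picture. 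The genuinely new ingredient is your treatment of the self-components $c_{ij}^i$. The paper disposes of these in Lemma \ref{nicemu} by asserting that nilpotency alone forces $c_{ij}^k=0$ whenever $k\in\{i,j\}$, which is not true for an arbitrary basis of a nilpotent Lie algebra (in the Heisenberg algebra take $f_1=e_1$, $f_2=e_2$, $f_3=e_1+e_3$, so that $[f_1,f_2]=-f_1+f_3$ and $c_{12}^1\ne 0$); you instead derive the identity $c_{rt}^s(c_{st}^s-c_{rt}^r)=0$ from the diagonality hypothesis and combine it with nilpotency of $\ad{X_t}$ (block-triangular structure, vanishing trace of nilpotent diagonal blocks) to kill the diagonal of every $\ad{X_t}$. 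This is precisely where nilpotency must enter --- the solvable Example \ref{solv3}, which is stably Ricci-diagonal but not nice, has the nonzero self-component $c_{13}^3$ --- so your lemma is not a detour but the essential step, and it in fact patches the soft spot in the paper's own reduction. Two minor points: your closing worry about the ``leftover'' monomial comparisons being consistent with niceness is unnecessary, since for this implication you only need to extract sufficiently many necessary conditions (the converse is already settled by formula (\ref{ricci})); and you should state explicitly that once the self-components vanish, the residual cases of the two niceness clauses (those with $k\in\{i,j\}$ or with the target index lying in $\{r,s\}$) become vacuous, which is the one line of bookkeeping your sketch leaves implicit.
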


A purely geometric condition on a basis of a nilpotent Lie algebra is therefore characterized as a neat algebraic condition.  The proof is based on the fact that the Ricci operator is precisely the moment map for the $\Gl_n(\RR)$-representation where the Lie brackets live.  In the non-nilpotent case, both implications in the theorem are not longer true (see Examples \ref{solv3}, \ref{solv4}, \ref{sl3}).

Back to the Ricci flow, what Theorem \ref{Thmintro} is saying is that, in the case of a nilmanifold $(N,g_0)$, to ask for a stably-Ricci diagonal basis in order to get a diagonal Ricci flow solution $g(t)$ starting at $g_0$ is quite expensive (see \cite{nilricciflow} for a study of the Ricci flow on nilmanifolds without the use of these special bases).  We prove for instance that any {\it algebraic soliton} $(N,g_0)$ (i.e. $\Ricci(g_0)\in\RR I\oplus\Der(\ngo)$, a condition which easily implies that $(N,g_0)$ is indeed a Ricci soliton), where $N$ is any (non-necessarily nilpotent) Lie group, gives rise to a diagonal Ricci flow solution (see Example \ref{RS}).

On the other hand, already in dimension $4$, there are nilmanifolds whose Ricci flow is not diagonal with respect to any orthonormal basis (see Example \ref{nodiag}).

\section{On the existence of a nice basis}\label{exist}

Let $\ngo$ be a nilpotent Lie algebra. A basis $\{X_1, \dots, X_n \}$ of $\ngo$ is called {\it nice} if the structural constants given by $[X_i,X_j]=\sum c_{ij}^k X_k$ satisfy that
\begin{equation}\label{nicecond}
\begin{array}{@{\bullet\;\;}l}
  \text{for all } i,j \text{ there exists at most one } k \text{ such that } c_{ij}^k \ne 0, \\
  \text{for all } i,k  \text{ there exists at most one } j \text{ such that } c_{ij}^k \ne 0.
\end{array}
\end{equation}

$\ngo$ is said to be of {\it type} $(n_1,...,n_r)$ if $n_i=\dim{C^{i-1}(\ngo)/C^i(\ngo)}$, where $C^i(\ngo)$ is the central descendent series of $\ngo$ defined by $C^0(\ngo)=\ngo$, $C^i(\ngo)=[\ngo,C^{i-1}(\ngo)]$.  One can always take a direct sum decomposition $\ngo=\ngo_1\oplus...\oplus\ngo_r$ such that $C^i(\ngo)=\ngo_{i+1}\oplus...\oplus\ngo_r$ for all $i$, and so $\dim{\ngo_i}=n_i$.

If the type of $\ngo$ is $(n_1,...,n_r)$, then we can reorder any nice basis so that the first $n_1$ elements give a basis for $\ngo_1$, the following $n_2$ give a basis for $\ngo_2$ and so on. This easily follows from the fact that a nonzero bracket $[X_i,X_j]$ must be a multiple of some $X_k$.

Concerning existence, from the explicit bases exhibited in the classification lists available in the literature, we have that any nilpotent Lie algebra of dimension $\le 5$ as well as any filiform $\NN$-graded Lie algebra admits a nice basis (see e.g. \cite{Nkl1}).  Also, by using the classification of $6$-dimensional nilpotent Lie algebras given for example in \cite{dGr}, it is easy to check that all of the $34$ algebras in the list are written in a nice basis with the only exception of $L_{6,11}$, which is denoted by $\mu_{11}$ in \cite{Wll}.  We now prove that indeed, this algebra can not have a nice basis (this fact has independently been mentioned in \cite{Frn} after Definition 2.7, joint with an idea of the proof).

\begin{proposition}\label{ex6}
The $6$-dimensional $4$-step nilpotent Lie algebra of type $(3,1,1,1)$ defined by
$$
\begin{array}{lcl}
[X_1,X_2]=X_4, & [X_1,X_4]= X_5,& [X_1,X_5]= [X_2,X_3]=[X_2,X_4]= X_6,
\end{array}
$$
does not admit any nice basis.
\end{proposition}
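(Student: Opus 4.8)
The plan is to argue by contradiction: assume $\ngo$ admits a nice basis $\{Y_1,\dots,Y_6\}$ and, using the reordering described above, take it adapted to the central descendent series, so that $Y_1,Y_2,Y_3$ span a complement $\ngo_1$ to $C^1(\ngo)=[\ngo,\ngo]=\langle X_4,X_5,X_6\rangle$, while $Y_4,Y_5,Y_6$ span $\ngo_2,\ngo_3,\ngo_4$ respectively. A direct computation gives $C^2(\ngo)=\langle X_5,X_6\rangle$, $C^3(\ngo)=\langle X_6\rangle$, and that $C^3(\ngo)$ is exactly the center; hence $\ngo$ is of type $(3,1,1,1)$ and $Y_6=\delta X_6$ with $\delta\ne0$. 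Throughout I would use the two defining conditions of a nice basis in the form: (i) each $[Y_i,Y_j]$ is a scalar multiple of a single $Y_k$; and (ii) for a fixed first index $i$ and a fixed target $k$ there is at most one partner $j$ with $c^k_{ij}\ne0$, so that the pairs whose bracket is a nonzero multiple of a given $Y_k$ are pairwise disjoint.

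First I would normalize the generators. Since $\ngo$ is generated by $\ngo_1$ and $\ngo_2$ is one–dimensional, the skew form $b$ on $\ngo_1$ induced by $[\,\cdot\,,\cdot\,]\bmod C^2(\ngo)$ is nonzero with one–dimensional radical $\langle\overline{X_3}\rangle$. By (i)–(ii), among $[Y_1,Y_2],[Y_1,Y_3],[Y_2,Y_3]$ exactly one is a nonzero multiple of $Y_4$ (any two of these pairs share an index), so after relabelling $[Y_1,Y_2]=cY_4$ and $Y_3$ spans the radical of $b$; thus $Y_3=c_3X_3+w_3$ with $c_3\ne0$ and $w_3\in C^1(\ngo)$. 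Likewise, $\ngo_3$ is one–dimensional and generated by $[\ngo_1,\ngo_2]=[\langle Y_1,Y_2,Y_3\rangle,Y_4]$, so condition (ii) forces exactly one generator to pair with $Y_4$ into $Y_5$; since $[Y_i,Y_4]\bmod C^3(\ngo)$ is proportional to the $X_1$–coordinate of $Y_i$ (because $[X_1,X_4]=X_5$ is the only bracket reaching $\ngo_3$, while $[X_2,X_4]=X_6\in C^3(\ngo)$ and $[X_3,X_4]=0$), this generator is the one with nonzero $X_1$–coordinate. After a further relabelling $[Y_1,Y_4]=c'Y_5$, the element $Y_1$ carries the $X_1$–coordinate and $Y_2=b_2X_2+b_3X_3+w_2$ with $b_2\ne0$ and $w_2\in C^1(\ngo)$.

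The contradiction then comes from the top layer. Writing $Y_4=\phi X_4+\psi X_5+\chi X_6$ with $\phi\ne0$ and using that $C^1(\ngo)=\langle X_4,X_5,X_6\rangle$ is abelian, I would first analyze $[Y_1,Y_3]$: it has no $X_4$–component, and by (ii) it cannot be a multiple of $Y_5$ (that role is already taken by $[Y_1,Y_4]$), so it is a multiple of $Y_6=\delta X_6$ or zero. As $\{Y_5,Y_6\}$ is a basis of $C^2(\ngo)=\langle X_5,X_6\rangle$ and $Y_6$ is a pure $X_6$, this forces the $X_5$–component of $[Y_1,Y_3]$, which equals the $X_4$–component $w_{3,4}$ of $Y_3$ up to the nonzero $X_1$–coordinate of $Y_1$, to vanish, i.e. $w_{3,4}=0$. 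With this, the remaining computations are immediate: $[Y_2,Y_3]=b_2c_3X_6$ and $[Y_2,Y_4]=b_2\phi X_6$ are both nonzero multiples of $Y_6$. Hence $c^6_{2,3}\ne0$ and $c^6_{2,4}\ne0$, so for the fixed indices $i=2$, $k=6$ there are two partners $j=3$ and $j=4$, contradicting (ii). Therefore $\ngo$ admits no nice basis.

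I expect the main obstacle to be the normalization step: deducing rigorously, from the niceness conditions alone, that any putative nice basis must take the form above — in particular that $Y_3$ lies in the radical of $b$, that the $Y_5$–producing generator is the one carrying the $X_1$–coordinate, and that $w_{3,4}=0$. Once the generators are pinned down, the collision of $[Y_2,Y_3]$ and $[Y_2,Y_4]$ on $Y_6$ is a short calculation reflecting the fact that, in the given presentation, $X_6=[X_2,X_3]=[X_2,X_4]$ is hit by two brackets sharing the index $2$. A secondary point to treat carefully is that the two relabellings (for the $Y_4$–pair and for the $Y_5$–producing generator) are compatible, amounting only to a possible swap within $\{1,2\}$ together with the choice of $Y_3$, so that no case is overlooked.
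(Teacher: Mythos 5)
Your argument is correct, and it takes a genuinely different route from the paper's. The paper proceeds by classification: it takes an arbitrary $4$-step nilpotent Lie algebra of type $(3,1,1,1)$ with no abelian factor admitting a nice basis, splits into cases according to $d=\dim\la [X_1,X_2],[X_1,X_3],[X_2,X_3]\ra\in\{1,2,3\}$, shows that only $d=2$ survives and then identifies the algebra as $L_{6,12}$ or $L_{6,13}$; the conclusion for $L_{6,11}$ then rests on the external fact that these three algebras are pairwise non-isomorphic (via their nilsoliton eigenvalue types). You instead work directly inside $L_{6,11}$ with its given structure constants: after adapting a putative nice basis $\{Y_1,\dots,Y_6\}$ to the lower central series, you pin down $Y_3$ as the radical direction of the induced form on $\ngo/C^1(\ngo)$, single out $Y_1$ as the unique generator with nonzero $X_1$-coordinate (the only one that can produce $Y_5$ from $Y_4$), use the niceness constraint on $[Y_1,Y_3]$ to kill the $X_4$-component of $Y_3$, and then exhibit the collision $[Y_2,Y_3],[Y_2,Y_4]\in\RR^* Y_6$ with the shared index $2$, violating the second condition in (\ref{nicecond}). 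I checked the normalization steps you flagged as the main obstacles (that $\overline{Y_3}$ spans the radical, that the $Y_5$-producer is the generator carrying $X_1$, that the two relabellings are compatible, and that $w_{3,4}=0$ is genuinely needed so that $[Y_2,Y_3]=b_2(c_3+w_{3,4})X_6$ cannot accidentally vanish), and they all go through. Your proof is more elementary and self-contained, avoiding any isomorphism invariants; the paper's proof is longer in spirit but yields more, namely the complete list of type-$(3,1,1,1)$ algebras without abelian factor that do admit a nice basis.
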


\begin{proof}
Let $\ngo$ be a $4$-step nilpotent Lie algebra of type $(3,1,1,1)$ with no abelian factor. Assume that $\{X_1,\dots,X_6\}$ is a nice bases of $\ngo$ so that $\ngo_1= \text{span}\{ X_1,X_2,X_3\}$, $\ngo_2=\RR X_4,$  $\ngo_3=\RR X_5$ and  $\ngo_4=\RR X_6$.  Thus $X_6$ is in the center of $\ngo$ and there exist $i,j,k,l\in\{ 1,2,3\}$ such that
\begin{equation}\label{3111}
[X_i,X_j]=aX_4, \quad [X_k,X_4]=bX_5, \quad [X_l,X_5]=cX_6, \qquad a,b,c \ne 0.
\end{equation}
Note that since the basis is nice, the brackets
$[X_1,X_2]$, $[X_1,X_3]$, $[X_2,X_3]$ if nonzero, they are linearly independent. We therefore have three cases to consider:
$$
d:=\dim{\la [X_1,X_2],\; [X_1,X_3],\; [X_2,X_3]\ra} =1,\;2 \text{ or } 3,
$$
and in any case we can assume that $[X_1,X_2]=X_4$.

The simplest one corresponds to $d=1$, where by using Jacobi identities it is easy to see that $\RR X_3$ is an abelian factor.
If $d=3$, that is,
$$
[X_1,X_2]=X_4,\quad [X_1,X_3]=X_5, \quad [X_2,X_3]=X_6,
$$
then we also must have
$$
[X_2,X_4]= b' X_5, \quad [X_1,X_5]=c' X_6,
$$
for some $b',c'\in\RR$.  The Jacobi identities now imply that $b'c'=0$ and therefore (\ref{3111}) would not hold.

Finally, if $d=2$, we may assume that
$$
[X_1,X_2]=X_4, \quad [X_2,X_3]= \alpha X_5+\beta X_6,
$$
where either $\alpha \ne 0$ or $\beta \ne 0$.  If $\beta=0$, by assuming that $\alpha =1$, we obtain that $k=1$ and $[X_3,X_4]=-[X_1,X_5]$.  Thus the non trivial Lie brackets are
$$
\begin{array}{lcl}
[X_1,X_2]=X_4, & [X_2,X_3]= X_5,& [X_3,X_4]= X_6,\\
& [X_1,X_4]= b X_5, &[X_1,X_5]= -X_6,
\end{array}
$$
for some $b \ne 0$.  A change of bases given by
$$
\{X_1, X_2, {\epsilon} X_3, \epsilon^{-1} X_4, {\epsilon} X_5, {\epsilon} X_6\},
$$
where $\epsilon = |b|^{1/2},$ shows that $\ngo$ is isomorphic to $L_{6,13}$.
A similar argument shows that if $\alpha =0$ we get $L_{6,12}$.

We have then showed that any type-$(3,1,1,1)$ $4$-step nilpotent Lie algebra with no abelian factor that admits a nice basis must be isomorphic to either $L_{6,12}$ or $L_{6,13}$, and thus $L_{6,11}$ can not admit a nice basis. We note that the fact that these three algebras are pairwise non-isomorphic also follows by using that they admit nilsolitons of different eigenvalue types (compare with $\mu_9$, $\mu_{10}$ and $\mu_{11}$ in \cite[Tables 1,3]{Wll}).
\end{proof}

A distinguished class of nilpotent Lie algebras admitting a nice basis is the given by nilradicals of Borel subalgebras of any semisimple Lie algebra.

On the other hand, it is proved in \cite[Example 4]{Nkl2} that the free $3$-step nilpotent Lie algebra in $3$ generators (which is of type $(3,3,8)$) does not admit a nice basis and, by a dimensional argument, it is also shown that there exist infinitely many $2$-step nilpotent Lie algebras with no  nice basis for any type $(p,q)$ such that
$$
\unm\min\{q(q-1),pq\} +q^2+p^2-1< \unm pq(q-1).
$$
This condition holds, for example, for any $q-1 \ge p \ge 6$ and the $13$-dimensional case $(6,7)$ is the lowest one.  In dimension $7,$ by using the classification given in \cite{dGr}, one can check that from the 117 algebras and $6$ curves of $7$-dimensional nilpotent Lie algebras all but $20$ algebras and 1 curve are written in a nice basis (see also \cite{Frn}).  We do not know whether these exceptions admit or not a nice basis.

We can also mention the following sufficient condition: any nilpotent Lie algebra admitting a {\it simple} derivation (i.e. diagonalizable over $\RR$ and with all its eigenvalues of multiplicity one) has a nice basis. Indeed, any basis of eigenvectors $\{X_1,\dots,X_n\}$ of $D$ is automatically nice since $[X_i,X_j]$ is either $0$ or also an eigenvector of $D$ and if $[X_i,X_j],[X_r,X_s]\in\RR X_k$, then the (pairwise different) eigenvalues satisfy  $d_i+d_j=d_r+d_s=d_k$, which implies that $\{ i,j\}$ and $\{ r,s\}$ are either equal or disjoint.  We note that this is not a necessary condition.  One may check for example in \cite[Tables 4-6]{solvsol}, that the $6$-dimensional algebras $\mu_8$ and $\mu_{18}$ do not admit any simple derivation and they both however admit a nice bases.

\section{Technical preliminaries}\label{prelim}

Let us consider the space of all skew-symmetric algebras of dimension $n$, which is
parameterized by the vector space
$$
V=\lam=\{\mu:\RR^n\times\RR^n\longrightarrow\RR^n : \mu\; \mbox{bilinear and
skew-symmetric}\}.
$$
There is a natural linear
action of $\G$ on $V$ defined by
\begin{equation}\label{action}
A\cdot\mu(X,Y)=A\mu(A^{-1}X,A^{-1}Y), \qquad X,Y\in\RR^n, \quad A\in\G,\quad \mu\in V,
\end{equation}
and the corresponding representation of the Lie algebra $\g$
of $\G$ on $V$ is given by
\begin{equation}\label{actiong}
\pi(\alpha)\mu=\alpha\mu(\cdot,\cdot)-\mu(\alpha\cdot,\cdot)-\mu(\cdot,\alpha\cdot),
\qquad \alpha\in\g,\quad\mu\in V.
\end{equation}
The canonical inner product $\ip$ on $\RR^n$ determines inner
products on $V$ and $\g$, both also denoted by $\ip$, as follows:
\begin{equation}\label{innV}
\la\mu,\lambda\ra= \sum\limits_{ijk}\la\mu(e_i,e_j),e_k\ra\la\lambda(e_i,e_j),e_k\ra, \qquad \la \alpha,\beta\ra=\tr{\alpha \beta^{\mathrm t}},
\end{equation}
where $\{e_1,...,e_n\}$ denote the canonical basis of $\RR^n$ and $\beta^t$ the transpose with respect to $\ip$.  We note that $\pi(\alpha)^t=\pi(\alpha^t)$ and $(\ad{\alpha})^t=\ad{\alpha^t}$ for
any $\alpha\in\g$, due to the choice of canonical inner products everywhere.

We use $\g=\sog(n)\oplus\sym(n)$ as a Cartan decomposition of $\g,$ where $\sog(n)$ and $\sym(n)$ denote the subspaces of skew-symmetric and
symmetric matrices, respectively.  The set $\ag$ of all diagonal $n\times n$ matrices is a maximal abelian subalgebra of $\sym(n)$ and therefore determines a system of roots $\Delta\subset \ag$.  Let $\Phi$ denote the set of positive roots, which is given by
\begin{equation}\label{sr}
\Phi=\{ E_{ll}-E_{mm}\in\ag, \;\; l>m\},
\end{equation}
where $E_{rs}$ denotes the matrix whose only nonzero
coefficient is $1$ at entry $rs$.  We have the root space decomposition $\g= \ag \oplus\bigoplus\limits_{\lambda\in \Delta} \ggo_\lambda$, where for each $\lambda \in \Delta$,
$$
\ggo_{\lambda}=\{X \in \g:\; [\alpha, X]=\la\alpha,\lambda\ra X, \quad \forall \alpha \in \ag\}.
$$
If $\{
e_1',...,e_n'\}$ is the basis of $(\RR^n)^*$ dual to the canonical basis $\{
e_1,...,e_n\}$, then
$$
\{ v_{ijk}=(e_i'\wedge e_j')\otimes e_k : 1\leq i<j\leq n, \; 1\leq k\leq n\}
$$
is a basis of weight vectors of $V$ for the representation (\ref{actiong}), where $v_{ijk}$
is actually the bilinear form on $\RR^n$ defined by
$v_{ijk}(e_i,e_j)=-v_{ijk}(e_j,e_i)=e_k$ and zero otherwise.  The corresponding
weights $\alpha_{ij}^k\in\ag$, $i<j$, are given by
\begin{equation}\label{alfas}
\pi(\alpha)v_{ijk}=(a_k-a_i-a_j)v_{ijk}=\la\alpha,\alpha_{ij}^k\ra v_{ijk},
\quad\forall\alpha=\left[\begin{array}{ccc} a_1&&\\ &\ddots&\\ &&a_n
\end{array}\right]\in\ag,
\end{equation}
where $\alpha_{ij}^k:=E_{kk}-E_{ii}-E_{jj}$.

The representation $(V,\pi)$ of $\glg_n(\RR)$ is multiplicity free, i.e. the weight spaces have all dimension one.  An additional special feature of this representation is that the sum of two weights $\alpha_{ij}^k + \alpha_{rs}^{t}$ is never zero.

\section{Proof of Theorem \ref{Thmintro}}

By fixing any basis of an $n$-dimensional nilpotent Lie algebra $\ngo$, we can identify its underlying vector space $\ngo$ with $\RR^n$
and view its bracket $\lb$ as an element of $V=\lam$ (see Section \ref{prelim} for all definitions and notation used in what follows).  The structural constants $c_{ij}^k$ are therefore given by
$$
[e_i,e_j]=\sum_{k}c_{ij}^ke_k,
\qquad\mbox{or}\qquad \lb=\sum_{k;\, i<j}c_{ij}^kv_{ijk}.
$$
We note that
\begin{quote}
$\ngo$ admits a nice basis if and only if the canonical basis $\{ e_1,\dots,e_n\}$ is nice for some $A\cdot\lb\in V$ with $A \in \G$,
\end{quote}
as this is precisely the `change of basis' action.  Let us first show that the nice condition on a bracket $\lb$, in the sense that the canonical basis is nice for $\ngo$, can be written in terms of the set $\Phi$ of positive roots of $\glg_n(\RR)$ (see (\ref{sr})) and the weights of the representation $(V,\pi)$ (see (\ref{alfas})).

\begin{lemma}\label{nicemu}
The canonical basis $\{ e_1,\dots,e_n\}$ is nice for $\ngo$ if and only if
$$
\alpha_{ij}^k - \alpha_{rs}^t \notin \Phi, \quad \text{ for any } \quad  c_{ij}^k, c_{rs}^{t}\ne 0.
$$
\end{lemma}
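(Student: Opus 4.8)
The plan is to translate both sides of the asserted equivalence into a single combinatorial statement about multisets of indices. Identifying a diagonal matrix $\diag(a_1,\dots,a_n)$ with the formal sum $\sum_p a_pE_{pp}$, the weights of (\ref{alfas}) read $\alpha_{ij}^k=E_{kk}-E_{ii}-E_{jj}$, so for any two index triples
\[
\alpha_{ij}^k-\alpha_{rs}^t=E_{kk}+E_{rr}+E_{ss}-E_{ii}-E_{jj}-E_{tt}.
\]
Since the $E_{pp}$ are linearly independent and an element of $\Phi$ is by (\ref{sr}) exactly a difference $E_{ll}-E_{mm}$ with $l>m$, carrying $E_{mm}$ and $E_{ll}$ across gives the basic equivalence I would record first:
\[
\alpha_{ij}^k-\alpha_{rs}^t\in\Phi \quad\Longleftrightarrow\quad \{k,r,s,m\}=\{i,j,t,l\}\ \text{as multisets, for some } l>m .
\]
This reduces the whole lemma to elementary bookkeeping with these two four-element multisets.

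For the easy implication, that the root condition forces niceness, I would argue contrapositively and exhibit, whenever the basis fails to be nice, nonzero $c_{ij}^k,c_{rs}^t$ with difference in $\Phi$. If some bracket $[e_i,e_j]$ has two nonzero components $c_{ij}^k,c_{ij}^t$ with $k\ne t$, the inner $-E_{ii}-E_{jj}$ terms cancel and $\alpha_{ij}^k-\alpha_{ij}^t=E_{kk}-E_{tt}$, exactly one ordering of which lies in $\Phi$. If instead two different pairs $\{i,j\}\ne\{r,s\}$ sharing an index produce nonzero multiples of the same $e_k$ (so $t=k$), they share exactly one index $p$; writing $a,b$ for the two remaining indices, the $E_{kk}$ and $E_{pp}$ terms cancel and $\alpha_{ij}^k-\alpha_{rs}^k=E_{bb}-E_{aa}$, again a root up to ordering. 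This covers both ways (\ref{nicecond}) can fail.

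The substantial direction is that a difference in $\Phi$ forces non-niceness. Here I would assume such a difference exists and, toward a contradiction, that the basis is nice. A preliminary observation tames the combinatorics: because $\ngo$ is nilpotent, a single-term bracket $[e_i,e_j]=c\,e_k$ with $c\ne0$ cannot have $k\in\{i,j\}$, since $k=j$ would make $e_j$ an eigenvector of $\ad e_i$, and $k=i$ would make $e_i$ an eigenvector of $\ad e_j$, each with nonzero eigenvalue. Niceness makes every bracket single-term, so this admissibility $k\notin\{i,j\}$, $t\notin\{r,s\}$ holds for the two brackets in play. I would then exploit the multiset identity $\{k,r,s,m\}=\{i,j,t,l\}$: since $l\ne m$ one has $l\in\{k,r,s\}$ and $m\in\{i,j,t\}$, so removing one copy of each of $l,m$ from both sides leaves the two-element identity $\{k,r,s\}\setminus\{l\}=\{i,j,t\}\setminus\{m\}$. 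Running through which of $k,r,s$ is $l$ and which of $i,j,t$ is $m$, admissibility discards every case in which the surviving identity forces an output index to equal an input index; the only survivors are $(i,j)=(r,s)$ with $k\ne t$ (a bracket with two components) and two distinct pairs sharing an index with equal output $k=t$ — precisely the two failures of niceness, contradicting the standing assumption.

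The main obstacle, and the only place requiring care, is the bookkeeping in this last case analysis: the output index $k$ is allowed to coincide with $r$ or $s$ (and $t$ with $i$ or $j$), so one cannot naively cancel $l$ against $m$, and it is exactly the admissibility observation that rules out the spurious overlaps (an output index equal to an input index) which would otherwise survive without signalling a genuine violation of (\ref{nicecond}). Once those degenerate configurations are excluded, every remaining case is one of the two canonical failures, and the equivalence follows.
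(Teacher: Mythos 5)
Your proof is correct and follows essentially the same route as the paper's: expand $\alpha_{ij}^k-\alpha_{rs}^t$ in the basis $\{E_{pp}\}$, match it against a positive root $E_{ll}-E_{mm}$, and use nilpotency to exclude an output index coinciding with an input index, so that the only possible cancelation patterns are exactly the two failures of (\ref{nicecond}). If anything you are more careful than the paper at one point: the paper invokes ``$c_{ij}^k\ne 0$ implies $i,j\ne k$'' as a general consequence of nilpotency, which is only valid for single-term brackets, whereas you correctly derive this admissibility from niceness (assumed toward a contradiction) together with the nilpotency of $\ad e_i$.
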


\begin{proof}
According to (\ref{nicecond}), $\{ e_1,\dots,e_n\}$ is nice for $\ngo$ if and only if
\begin{equation}\label{II}
\sharp \{k: c_{ij}^k \ne 0\} \le 1 \qquad \mbox{  and  } \qquad \sharp \{j: c_{ij}^k \ne 0\} \le 1.
\end{equation}
If $\alpha_{ij}^k - \alpha_{rs}^t \in \Phi$ for some $c_{ij}^k, c_{rs}^{t}\ne 0$, then
\begin{equation}\label{resta}
E_{kk}-E_{ii}-E_{jj}- E_{tt}+E_{rr}+E_{ss}= E_{ll}-E_{mm},
\end{equation}
for some $m<l$.
By using the fact that $\ngo$ is nilpotent and therefore $c_{ij}^k \ne 0$ implies that $i,j \ne k,$ it is easy to check that this cancelation can only be possible when either $\{i,j\}=\{r,s\}$ and $t<k$, or $k=t$ and $\sharp (\{i,j\}\cap \{r,s\})=1$. In any case, one of the conditions in (\ref{II}) will not hold.  In other words, $\alpha_{ij}^k - \alpha_{rs}^t \in \Phi$ for some $c_{ij}^k, c_{rs}^{t}\ne 0$ if and only if $\{ e_1,\dots,e_n\}$ is not a nice basis for $\ngo$, as was to be shown.
\end{proof}

We note that each $\mu\in V$ satisfying the Jacobi identity determines a Lie group $N_\mu$ (the simply connected Lie group with Lie algebra $(\RR^n,\mu)$), which can be endowed with the left invariant metric defined by the canonical inner product $\ip$ (fixed).  Let us denote by $\ricci_\mu$ the Ricci tensor of such a metric, and by $\Ricci_\mu$ its Ricci operator.  The action of $\G$ on $V$ given by (\ref{action}) has the following geometric interpretation:
each $A\in\G$ determines a Riemannian isometry
\begin{equation}\label{id}
(N_{A\cdot\mu},\ip)\longrightarrow (N_{\mu},\la A\cdot,A\cdot\ra)
\end{equation}
by exponentiating the Lie algebra isomorphism $A^{-1}:(\RR^n,A\cdot\mu)\longrightarrow(\RR^n,\mu)$.  It follows from (\ref{id}) that
\begin{quote}
the canonical basis $\{ e_1,\dots,e_n\}$ is stably Ricci-diagonal for $\ngo$ if and only the matrix of $\Ricci_{A\cdot\lb}$ is diagonal for any diagonal $A \in \G$.
\end{quote}

It is proved in \cite[Proposition 3.5]{minimal} that when $\mu$ is in addition nilpotent,
\begin{equation}\label{defmm}
\la \Ricci_\mu,\alpha\ra=4\la\pi(\alpha)\mu,\mu\ra, \qquad \forall\alpha\in\sym(n).
\end{equation}

\begin{remark}
This is equivalent to say that the
moment map $m:V\smallsetminus\{ 0\}\longrightarrow\sym(n)$ for the action (\ref{action}) is given by
$m(\mu)=\tfrac{4}{||\mu||^2}\Ricci_{\mu}$, a remarkable fact which is really in the core not only of the main result obtained in the present paper but of the whole subject of Ricci flow and solitons on nilmanifolds (see \cite{cruzchica,nilricciflow}).
\end{remark}

In the light of what has already been showed in this section, Theorem \ref{Thmintro} follows from the equivalence between parts (i) and (iv) in the following theorem.

\begin{theorem}\label{Thm} The following conditions are equivalent:
\begin{enumerate}
  \item[(i)] The canonical basis $\{ e_1,\dots,e_n\}$ is nice for $\ngo$.
  \item[(ii)] $\langle \pi(X)v_{ijk}, v_{rst}\rangle =0$, for all $X\in \ggo_\lambda,$ $\lambda \in \Phi,$ $c_{ij}^k, c_{rs}^t\ne 0.$
  \item[(iii)] $ \ricci_{A\cdot\mu}(e_l,e_m) =0$ for all $l \ne m$ and any diagonal $A \in \G$.
   \item[(iv)] The canonical basis $\{ e_1,\dots,e_n\}$ is stably Ricci-diagonal for $\ngo$.
\end{enumerate}
\end{theorem}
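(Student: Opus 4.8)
The plan is to prove the equivalences in the order (iii)$\Leftrightarrow$(iv), (i)$\Leftrightarrow$(ii), and (ii)$\Leftrightarrow$(iii). The first is essentially a restatement: by the geometric interpretation recorded just after (\ref{id}), the canonical basis is stably Ricci-diagonal for $\ngo$ precisely when the matrix of $\Ricci_{A\cdot\lb}$ is diagonal for every diagonal $A\in\G$, and since everything is measured with the canonical inner product this means exactly $\ricci_{A\cdot\lb}(e_l,e_m)=\la\Ricci_{A\cdot\lb}e_m,e_l\ra=0$ for all $l\ne m$. So (iii)$\Leftrightarrow$(iv) is immediate.

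For (i)$\Leftrightarrow$(ii) I would translate everything into weights. Each $v_{ijk}$ is a weight vector of weight $\alpha_{ij}^k$, and weight spaces for distinct weights are orthogonal because $\pi(\alpha)$ is symmetric for $\alpha\in\ag$; hence for $X\in\ggo_\lambda$ one has $\pi(X)v_{ijk}\in V_{\alpha_{ij}^k+\lambda}$, so $\la\pi(X)v_{ijk},v_{rst}\ra\ne0$ forces $\alpha_{rs}^t-\alpha_{ij}^k=\lambda$. The easy implication (i)$\Rightarrow$(ii) then follows from Lemma \ref{nicemu}: were some coefficient nonzero with $\lambda\in\Phi$ and $c_{ij}^k,c_{rs}^t\ne0$, we would obtain $\alpha_{rs}^t-\alpha_{ij}^k\in\Phi$, contradicting the lemma applied to the ordered pair $(r,s,t),(i,j,k)$.

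The substantial half is the contrapositive of (ii)$\Rightarrow$(i): assuming the basis is not nice, I must produce a genuinely nonzero matrix coefficient. By Lemma \ref{nicemu} there is a pair with $\alpha_{ij}^k-\alpha_{rs}^t\in\Phi$, and the cancelation analysis already made in the proof of that lemma (equation (\ref{resta}), using nilpotency so that $k\notin\{i,j\}$ and $t\notin\{r,s\}$) leaves exactly two configurations: $\{i,j\}=\{r,s\}$ with $k\ne t$, or $k=t$ with $\sharp(\{i,j\}\cap\{r,s\})=1$. In both $\ggo_\lambda=\RR E_{lm}$ is one-dimensional, and I would simply evaluate $\pi(E_{lm})v_{rst}$ at the pair $\{e_i,e_j\}$ via (\ref{actiong}). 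Nilpotency annihilates all but one of the three terms of $\pi(E_{lm})$, and the surviving term equals $\pm e_k$ there, so $\la\pi(E_{lm})v_{rst},v_{ijk}\ra\ne0$. This explicit nonvanishing is the main obstacle, and it is where the multiplicity-free structure together with nilpotency is really used.

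Finally, for (ii)$\Leftrightarrow$(iii) I would pass through the moment map identity (\ref{defmm}). Taking the symmetric matrix $\alpha=E_{lm}+E_{ml}$ and using $\pi(\alpha)^t=\pi(\alpha^t)$ yields $\ricci_\mu(e_l,e_m)=4\la\pi(E_{lm})\mu,\mu\ra$ for every $\mu$. For diagonal $A=\diag(a_1,\dots,a_n)$ the action scales weight vectors, $A\cdot v_{ijk}=\tfrac{a_k}{a_ia_j}v_{ijk}$, so writing $A\cdot\lb=\sum\tilde c_{ij}^k v_{ijk}$ one has $\tilde c_{ij}^k\ne0\Leftrightarrow c_{ij}^k\ne0$ and, for $l>m$,
\begin{equation*}
\ricci_{A\cdot\lb}(e_l,e_m)=4\sum \tilde c_{ij}^k\,\tilde c_{rs}^t\,\la\pi(E_{lm})v_{ijk},v_{rst}\ra .
\end{equation*}
Condition (ii) makes every summand vanish, giving (iii). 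Conversely, on the contributing terms $\alpha_{rs}^t-\alpha_{ij}^k=E_{ll}-E_{mm}$ is fixed, so $\tilde c_{ij}^k\tilde c_{rs}^t$ is a nonzero multiple of the monomial in $a_1,\dots,a_n$ determined by the weight $2\alpha_{ij}^k+(E_{ll}-E_{mm})$; distinct contributing triples give distinct monomials, as $(i,j,k)\mapsto\alpha_{ij}^k$ is injective by multiplicity-freeness, and these are linearly independent functions of $A$. Vanishing for all diagonal $A$ therefore forces each coefficient, hence each $\la\pi(E_{lm})v_{ijk},v_{rst}\ra$ with $c_{ij}^k,c_{rs}^t\ne0$, to be zero; since every $X\in\ggo_\lambda$ with $\lambda\in\Phi$ is a multiple of such an $E_{lm}$, this is (ii).
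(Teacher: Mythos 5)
Your proposal is correct and follows essentially the same route as the paper: Lemma \ref{nicemu} plus the weight-space computation for (i)$\Leftrightarrow$(ii), the moment map identity (\ref{defmm}) together with linear independence of characters of the diagonal torus for (ii)$\Leftrightarrow$(iii), and (\ref{id}) for (iii)$\Leftrightarrow$(iv). The only differences are minor: you separate the monomials in $a_1,\dots,a_n$ directly where the paper substitutes $A=\exp(t\alpha_o)$ and invokes linear independence of exponentials, and you verify the nonvanishing of $\la\pi(E_{lm})v_{rst},v_{ijk}\ra$ by explicit evaluation --- a point the paper's equivalence (\ref{ecalfa}) asserts as an ``if and only if'' but whose ``if'' direction its displayed commutator computation leaves implicit, so your extra care there is warranted rather than redundant.
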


\begin{proof}
We first note that parts (iii) and (iv) are equivalent by (\ref{id}), as it was mentioned above.  By evaluating the maps $\pi([\alpha, X])=\pi(\alpha)\pi(X)-\pi(X)\pi(\alpha)$ at $v_{ijk}$ and then taking scalar product with $v_{rst}$ we obtain
$$
\langle\lambda+\alpha_{ij}^k,\alpha\rangle\langle\pi(X)v_{ijk},v_{rst}\rangle = \langle \alpha_{rs}^t,\alpha\rangle \langle \pi(X)v_{ijk},v_{rst}\rangle, \qquad \forall X\in \ggo_\lambda, \quad \lambda \in \Phi, \quad \alpha \in \ag,
$$
from which follows that
\begin{equation}\label{ecalfa}
\langle \pi(X)v_{ijk},v_{rst}\rangle  \ne 0 \quad\mbox{if and only if}\quad \alpha_{rs}^t - \alpha_{ij}^k \in \Phi.
\end{equation}
The equivalence between parts (i) and (ii) therefore follows from Lemma \ref{nicemu}.

By using (\ref{defmm}) and the fact that for any $A=\diag(a_1,\dots,a_n) \in \G$, $A\cdot  v_{ijk}= \tfrac{a_k}{a_ia_j}v_{ijk}$, we get that
\begin{align}
\ricci_{A\cdot\lb}(e_l,e_m) &= \langle \pi(E_{lm})A\cdot \lb, A\cdot \lb\rangle \notag \\
 &= \sum c_{ij}^k c_{rs}^t \langle \pi(E_{lm})A \cdot v_{ijk},A.v_{rst}\rangle  \label{diagonal} \\
 &= \sum c_{ij}^k c_{rs}^t \tfrac{a_k}{a_ia_j}\tfrac{a_t}{a_ra_s}\langle \pi(E_{lm})v_{ijk},v_{rst}\rangle, \notag
\end{align}
and thus part (iii) follows from (ii) since for all $l>m$, $E_{lm}\in \ggo_{\lambda}$ for $\lambda=E_{ll}-E_{mm}\in \Phi$.

Finally, we will show that part (iii) implies (ii). We begin by noting that if $A=\exp(\alpha),$ $\alpha \in \ag$, then
$$
\exp(\alpha)\cdot v_{ijk}=e^{\langle\alpha_{ij}^k,\alpha\rangle}v_{ijk}.
$$
It follows as in (\ref{diagonal}) and by using part (iii) that for all $l>m$,
\begin{equation}\label{alphacero}
0=\ricci_{A\cdot\lb}(e_l,e_m)= \sum c_{ij}^k c_{rs}^t e^{\langle\alpha_{ij}^k+\alpha_{rs}^t,\alpha\rangle}\langle \pi(E_{lm})v_{ijk},v_{rst}\rangle,
\end{equation}
where the sum runs only over the indices $ijk$ and $rst$ such that $\alpha_{ij}^k-\alpha_{rs}^t =E_{lm} \in \Phi$, since otherwise $\langle \pi(E_{lm})v_{ijk},v_{rst}\rangle =0$ by (\ref{ecalfa}).  If $\alpha_{i_oj_o}^{k_o}-\alpha_{r_os_o}^{t_o}=E_{lm}$, then there exists $\alpha_o \in \ag$ such that
$0\ne\langle\alpha_{i_oj_o}^{k_o}+\alpha_{r_os_o}^{t_o},\alpha_o\rangle$ and $\langle\alpha_{i_oj_o}^{k_o}+\alpha_{r_os_o}^{t_o},\alpha_o\rangle \ne \langle\alpha_{ij}^k+\alpha_{rs}^t,\alpha_o\rangle$ for any pair $ijk$, $rst$ of indices with $\alpha_{ij}^{k}-\alpha_{rs}^t =E_{lm}$ (note that $\alpha_{i_oj_o}^{k_o}+\alpha_{r_os_o}^{t_o}\ne \alpha_{ij}^k+\alpha_{rs}^t$ for any such a $3$-uple of indices).  It follows from (\ref{alphacero}) that
$$
0=\sum c_{ij}^k c_{rs}^t \langle \pi(E_{lm})v_{ijk},v_{rst}\rangle e^{t\langle\alpha_{ij}^k+\alpha_{rs}^t,\alpha_o\rangle}= \sum_{n=1}^N a_n e^{tb_n}, \qquad\forall t\in\RR,
$$
where in the last equality we have joined terms with the same exponent.  Thus $a_n =0$  for all $n$, and in particular, for the exponent $b_{n_o}= \langle\alpha_{i_oj_o}^{k_o}+\alpha_{r_os_o}^{t_o},\alpha_o\rangle$, whose coefficient is given by
$$
a_{n_o}=c_{i_oj_o}^{k_o}.c_{r_os_o}^{t_o}\la\pi(E_{lm})v_{i_oj_ok_o},v_{r_os_ot_o}\ra=0.
$$
This implies that $c_{i_oj_o}^{k_o}.c_{r_os_o}^{t_o}=0$, and so part (ii) follows, concluding the proof of the theorem.
\end{proof}

\section{Ricci flow on Lie groups}\label{RF}

Let $(N,g_0)$ be a (non-necessarily nilpotent) Lie group endowed with a left-invariant metric with metric Lie algebra $(\ngo,\ip_0)$.  Let $g(t)$ be a solution to the {\it Ricci flow}
\begin{equation}\label{RF}
\dpar g(t)=-2\ricci(g(t)),\qquad g(0)=g_0.
\end{equation}
The short time existence of a solution follows from \cite{Shi}, as $g_0$ is homogeneous and hence complete and of bounded curvature. Alternatively, one may require $N$-invariance of $g(t)$ for all $t$, and thus the metric Lie algebra of $(N,g(t))$ would have the form $(\ngo,\ip_t)$, where $\ip_t:=g(t)(e)$.  The Ricci flow equation (\ref{RF}) is therefore equivalent to the ODE
\begin{equation}\label{RFip}
\ddt \ip_t=-2\ricci(\ip_t),
\end{equation}
where $\ricci(\ip_t):=\ricci(g(t))(e):\ngo\times\ngo\longrightarrow\RR$, and hence short time existence and uniqueness of the solution in the class of $N$-invariant metrics is guaranteed.  In this way, $g(t)$ is homogeneous for all $t$, and hence the uniqueness within the set of complete and with bounded curvature metrics follows from \cite{ChnZhu}.  It is actually a simple matter to prove that such a uniqueness result, in turn, implies our assumption of $N$-invariance, as the solution must preserve isometries.  The need for this circular argument is due to the fact that the uniqueness of the Ricci flow solution is still an open problem in the noncompact general case (see \cite{Chn}).

In any case, there is an interval $(a,b)\subset\RR$ such that $0\in (a,b)$ and where existence and uniqueness (within complete and with bounded curvature metrics) of the Ricci flow $g(t)$ starting at $(N,g_0)$ hold.

\begin{remark}
One can use in the case of Lie groups the existence and uniqueness of the solution $g(t)$, forward and backward from any $t\in(a,b)$, to get that the isometry groups satisfy $\Iso(N,g(t))=\Iso(N,g_0)$ for all $t$.  This fact has recently been proved for the more general class of all complete and with bounded curvature metrics in \cite{Kts}.
\end{remark}

It is easy to prove that if $P(t)$ is the smooth curve of positive definite operators of $(\ngo,\ip_0)$ such that
$$
\ip_t=\la P(t)\cdot,\cdot\ra_0,
$$
then the Ricci flow equation (\ref{RFip}) determines the following ODE for $P(t)$:
\begin{equation}\label{RFP}
\ddt P(t)=-2P(t)\Ricci_t, \qquad P(0)=I,
\end{equation}
where $\Ricci_t:=\Ricci(g(t))(e):\ngo\longrightarrow\ngo$ is the Ricci operator.

In this section, we aim to understand under what conditions on the starting metric $(N,g_0)$ one has that the Ricci flow solution $g(t)$ is {\it diagonal}, in the sense that $P(t)$ is diagonal when written in some fixed orthonormal basis of $(\ngo,\ip_0)$ for all $t\in(a,b)$ (the metric $g_0$ will be called in such a case {\it Ricci flow diagonal}).  It is easy to check that this is equivalent to have the same property for $\Ricci_t$ (see (\ref{RFP})), and also to the commutativity of the family of symmetric operators $\{ P(t):t\in(a,b)\}$.

Clearly, the existence of an orthonormal stably-Ricci diagonal basis $\beta$ for $(\ngo,\ip_0)$ implies that $g(t)$ is diagonal, as the diagonal matrices with respect to $\beta$ are invariant under the ODE (\ref{RFP}).  It follows from Theorem \ref{Thmintro} that when $N$ is nilpotent, stably-Ricci diagonal bases must be necessarily nice, which shows that this is really a too strong condition to ask to a generic left-invariant metric, and even worst, to any left-invariant metric if the nilpotent Lie algebra $\ngo$ happens to have no nice basis whatsoever.

We now show, as an application of Theorem \ref{Thmintro}, that the existence of an orthonormal stably-Ricci diagonal basis for $(\ngo,\ip_0)$ is not necessary to get a diagonal Ricci flow solution $g(t)$.

\begin{example}\label{RS}
$(N,g_0)$ is said to be an {\it algebraic soliton} if $\Ricci_0=cI+D$ for some $c\in\RR$ and $D\in\Der(\ngo)$ (these metrics are called in the literature {\it nilsolitons} and {\it solvsolitons} in the case when $N$ is nilpotent or solvable, respectively).  When $N$ is simply connected, this condition implies that $(N,g_0)$ is indeed a Ricci soliton, as it easily follows that $\ricci(g_0)=cg_0-\unm L_{X_D}g_0$, where $X_D$ denotes the field determined by the one-parameter group of automorphisms of $N$ with derivatives $e^{tD}\in\Aut(\ngo)$.  It is easy to check that in this case,
$$
P(t)=(-2ct+1)e^{\tfrac{\log(-2ct+1)}{c}D} =e^{\tfrac{\log(-2ct+1)}{c}\Ricci_0}
$$
is the solution to (\ref{RFP}), and thus the Ricci flow $g(t)$ is always diagonal for algebraic solitons.  Indeed, if $\{ X_1,\dots,X_n\}$ is an orthonormal basis of eigenvectors of $\Ricci_0$ with respective eigenvalues $r_1,\dots,r_n$, then
$$
P(t)=\diag\left((-2ct+1)^{r_1/c},\dots,(-2ct+1)^{r_n/c}\right),
$$
which generalizes to the class of all algebraic solitons results on the asymptotic behavior of some nilsolitons obtained in \cite[Theorem 2.2]{Pyn} and \cite{Wllm}.
\end{example}

Any nilsoliton is therefore Ricci flow diagonal, though the Lie algebras of many of them do not admit any nice basis, and consequently do not admit stably-Ricci diagonal bases either by Theorem \ref{Thmintro}.  The lowest dimensional example of a nilsoliton whose nilpotent Lie algebra does not admit any nice basis is given in Proposition \ref{ex6} and has dimension $6$.  As another example of a nilsoliton which does not admit any nice basis we can take the free $3$-step nilpotent Lie algebra in $3$ generators (see \cite{Nkl3}).

\begin{remark}
We take this opportunity to point out that the explicit nilsoliton metric for the group in Proposition \ref{ex6} given in \cite[Example 3.4]{Wll} is wrong.  The formula for the Ricci operator of any diagonal metric is not always diagonal as it is asserted there.  Nevertheless, the existence of a nilsoliton metric (with the same eigenvalue-type) can be proved by using an approach similar to \cite[Example 3.3]{Frn}.
\end{remark}

In the non-nilpotent case, examples of diagonal Ricci flow solutions which are not written in a nice basis are much easier to find, as the following $3$-dimensional solvable example shows.

\begin{example}\label{solv3}
Let $\sg_3$ be the $3$-dimensional Lie algebra defined by
$$
[X_1,X_3]=X_2+X_3.
$$
The basis $\{ X_1,X_2,X_3\}$ is not nice but it is stably-Ricci diagonal.  Indeed, for any inner product $\ip$ such that
$$
\la X_i,X_i\ra=a_i^2, \quad a_i>0, \qquad \la X_i,X_j\ra=0 \qquad\forall i\ne j,
$$
we use (\ref{ricci2}) to get that $H=\tfrac{1}{a_1^2}X_1$ and thus for all $r\ne s$ the corresponding Ricci tensor satisfies
\begin{align*}
\ricci(X_r,X_s) &=\unc\sum\la [\tfrac{1}{a_i}X_i,\tfrac{1}{a_j}X_j],X_r\ra \la [\tfrac{1}{a_i}X_i,\tfrac{1}{a_j}X_j],X_s\ra \\ 
& -\unm\la [H,X_r],X_s\ra-\unm\la X_r,[H,X_s]\ra.
\end{align*}
This clearly vanishes if either $r$ or $s$ is $1$, and
\begin{align*}
\ricci(X_2,X_3) & =\unm\la [\tfrac{1}{a_1}X_1,\tfrac{1}{a_3}X_3],X_2\ra \la [\tfrac{1}{a_1}X_1,\tfrac{1}{a_3}X_3],X_3\ra -\unm\la X_2,[\tfrac{1}{a_1^2}X_1,X_3]\ra \\
&=\unm\left(\tfrac{a_2}{a_1}\right)^2- \unm\left(\tfrac{a_2}{a_1}\right)^2=0.
\end{align*}
This example also shows that, beyond nilpotent Lie groups, Theorem \ref{Thmintro} is not longer true.
\end{example}

On the other hand, in the non-nilpotent case, it is also possible to find nice basis which are not stably-Ricci diagonal, as the following two examples show.  In the first one, what fails to be diagonal is the `unimodularity' part $S(\ad{H})$ in formula (\ref{ricci2}), while in the second one the Killing form part $B$ is not diagonal.

\begin{example}\label{solv4}
Let $\sg_4$ be the solvable Lie algebra defined by
$$
[X_1,X_2]=2X_3, \qquad [X_1,X_3]=X_2, \qquad [X_1,X_4]=X_4.
$$
If $\ip$ is the inner product on $\sg_4$ for which the basis $\{ X_1,\dots,X_4\}$ is orthonormal, then the Ricci operator is
$$
\Ricci_{\ip}=\left[\begin{array}{cccc}
-\tfrac{11}{2}&0&0&0 \\
0&-\tfrac{3}{2}&-\tfrac{3}{2}&0 \\
0&-\tfrac{3}{2}&\tfrac{3}{2}&0 \\
0&0&0&-1
\end{array}\right].
$$
Notice that the basis is however nice.
\end{example}

\begin{example}\label{sl3}
Let $\{ X_1,X_2,X_3\}$ be a basis of $\slg_2(\RR)$ such that
$$
[X_1,X_2]=X_2, \qquad [X_1,X_3]=-X_3, \qquad [X_2,X_3]=X_1.
$$
This basis is nice but the Ricci operator of the metric which makes it orthonormal equals
$$
\Ricci=\left[\begin{array}{ccc}
-\tfrac{3}{2} &0&0 \\
0& -1&-1 \\
0&-1&-\unm
\end{array}\right].
$$
\end{example}

We finally provide an example of a left-invariant metric on a $4$-dimensional nilpotent Lie group whose Ricci flow is not diagonal.  Some other left-invariant metrics on this group were proved to be Ricci flow diagonal in \cite[Proposition 3.5]{IsnJckLu}.

\begin{example}\label{nodiag}
Let $\ngo_4$ be the $4$-dimensional $3$-step nilpotent Lie algebra defined by
$$
[X_1,X_2]=\sqrt{2}X_3+\sqrt{2}X_4, \qquad [X_1,X_3]=\sqrt{2}X_4.
$$
If $\ip_0$ is the inner product on $\ngo_4$ for which the basis $\{ X_1,\dots,X_4\}$ is orthonormal, then the Ricci operator of the corresponding nilmanifold $(N_4,g_0)$, written in this basis, is given by
$$
\Ricci_0=\left[\begin{array}{cccc}
-3&0&0&0 \\
0&-2&-1&0 \\
0&-1&0&1 \\
0&0&1&2
\end{array}\right].
$$
The following is an orthonormal basis of eigenvectors of $\Ricci_0$, which is unique up to $\pm 1$-scaling and permutations,
$$
\begin{array}{lcl}
Y_1=(1,0,0,0), && Y_2=\left(0,-\unm-\tfrac{1}{\sqrt{6}},-\tfrac{1}{\sqrt{6}},\unm-\tfrac{1}{\sqrt{6}}\right), \\
Y_3=\tfrac{1}{\sqrt{6}}(0,1,-2,1), && Y_4=\left(0,-\unm+\tfrac{1}{\sqrt{6}}, \tfrac{1}{\sqrt{6}}, \unm+\tfrac{1}{\sqrt{6}}\right),
\end{array}
$$
with respective eigenvalues $-3,-\sqrt{6},0,\sqrt{6}$.  Let us assume that the Ricci flow $g(t)$ starting at $(N_4,g_0)$ is diagonal.  Thus there exists an orthonormal basis of $(\ngo_4,\ip_0)$ such that the corresponding matrix of $\Ricci_t$ is diagonal for all $t$; in particular, for $t=0$, and so such a basis must be $\{ Y_1,\dots,Y_4\}$ up to $\pm 1$-scaling and permutations.  This implies that both $\Ricci_t$ and the solution $P(t)$ to the ODE (\ref{RFP}) are diagonal with respect to the basis $\{Y_1,\dots,Y_4\}$ for all $t$, say $P(t)=\diag(a^{-2},b^{-2},c^{-2},d^{-2})$, $a,b,c,d>0$.  A straightforward computation gives that the Ricci curvature therefore satisfies
\begin{align*}
  \la\Ricci_tY_2,Y_3\ra_t &= \tfrac{a^2}{(\sqrt{6}+2)^2}\left(\tfrac{4+\sqrt{6}}{24}\tfrac{d^2}{bc}+\tfrac{12+5\sqrt{6}}{8}\tfrac{c}{b}+\tfrac{12 +5\sqrt{6}}{8}\tfrac{b}{c}-\tfrac{76+31\sqrt{6}}{24}\tfrac{bc}{d^2}\right) =0,\\
  \la\Ricci_tY_2,Y_4\ra_t &= \tfrac{a^2}{(\sqrt{6}+2)^2}\left(\tfrac{4+\sqrt{6}}{24}\tfrac{bd}{c^2}+\tfrac{12+5\sqrt{6}}{8}\tfrac{d}{b}+\tfrac{12 +5\sqrt{6}}{8}\tfrac{b}{d}-\tfrac{76+31\sqrt{6}}{24}\tfrac{c^2}{bd}\right) =0, \\
  \la\Ricci_tY_3,Y_4\ra_t &= \tfrac{a^2(5+2\sqrt{6})}{12(\sqrt{6}+2)^2b^2cd} (b^4-c^2d^2) =0. 
\end{align*}
All positive solutions to this system are easily seen to satisfy  $b^2=c^2=d^2$.  This in turn implies that
$$
\la\Ricci_tY_2,Y_2\ra=0, \qquad \la\Ricci_tY_3,Y_3\ra= \tfrac{a^2(12+5\sqrt{6})}{(\sqrt{6}+2)^2},
$$
and hence $b^2$ and $c^2$ can not solve the ODE system determined by (\ref{RFP}) and still be equal each other, a contradiction.  We conclude that the Ricci flow $g(t)$ starting at the nilmanifold $(N_4,g_0)$ is not diagonal with respect to any orthonormal basis.
\end{example}

\section{Appendix: Ricci curvature of left invariant metrics}

We give in this section a formula for the Ricci operator of a left-invariant metric on a Lie group with corresponding metric Lie algebra $(\ggo,\ip)$.  There exists a unique element $H\in\ggo$ such that $\la
H,X\ra=\tr{\ad{X}}$ for any $X\in\ggo$.  If $B$ denotes the symmetric operator defined by
the Killing form of $\ggo$ relative to $\ip$ (i.e. $\la BX,Y\ra=\tr{\ad{X}\ad{Y}}$ for
all $X,Y\in\ggo$), then the Ricci operator
$\Ricci$ of $(\ggo,\ip)$ is given by (see for instance \cite[7.38]{Bss}):
\begin{equation}\label{ricci2}
\Ricci=M-\unm B-S(\ad{H}),
\end{equation}
where $S(\ad{H})=\unm(\ad{H}+(\ad{H})^t)$ is the symmetric part of
$\ad{H}$ and $M$ is the symmetric operator such that $\la MX,Y\ra$ equals the right hand side of formula (\ref{ricci}) for
all $X,Y\in\ggo$, where $\{ X_i\}$ can be any orthonormal basis of $(\ggo,\ip)$.

\end{document}